\newtheorem{thm}{Theorem}[section]
\newtheorem{cor}[thm]{Corollary}
\newtheorem{lem}[thm]{Lemma}
\newtheorem{prop}[thm]{Proposition}
\theoremstyle{definition}
\newtheorem{defn}[thm]{Definition}
\theoremstyle{remark}
\newtheorem{rem}[thm]{Remark}
\numberwithin{equation}{section}
\title{UNE FORMULE $P$-ADIQUE DU NOMBRE DES CLASSES D'ORDRE SUPERIEUR}
\author[I. Blanco-Chac\'{o}n]{Iv\'{a}n Blanco-Chac\'{o}n}
\thanks{Partially supported by the project MTM2009-07024 of Spanish \textit{Ministerio de Ciencia e Innovaci\'{o}n}.}
\address{Facultad de Matem\'{a}ticas,
Universidad de Barcelona,
Gran Via de les Corts Catalanes, 585.
08007 Barcelona,
Spain.
}
\begin{document}

\renewcommand\baselinestretch{1.2}
\renewcommand{\arraystretch}{1}
\def\base{\baselineskip}
\font\tenhtxt=eufm10 scaled \magstep0 \font\tenBbb=msbm10 scaled
\magstep0 \font\tenrm=cmr10 scaled \magstep0 \font\tenbf=cmb10
scaled \magstep0


\def\evenhead{{\protect\centerline{\textsl{\large{Iv\'{a}n\,Blanco-Chac\'{o}n}}}\hfill}}

\def\oddhead{{\protect\centerline{\textsl{\large{On the Kubota-Leopoldt $p$-adic $L$-function$}}}\hfill}}

\pagestyle{myheadings} \markboth{\evenhead}{\oddhead}

\thispagestyle{empty}

\begin{center}
\textbf{A HIGHER ORDER $P$-ADIC CLASS NUMBER FORMULA}
\end{center}
\maketitle

\begin{abstract}
We generalize a formula of Leopoldt which relates the $p$-adic regulator modulo $p$ of a real abelian extension of $\mathbb{Q}$ with the value of the relative Dedekind zeta function at $s=2-p$. We use this generalization to give a statement on the non-vanishing modulo $p$ of this relative zeta function at the point $s=1$ under a mild condition.
\end{abstract}

\renewcommand{\abstractname}{R\'{e}sum\'{e}}
\begin{abstract}
On g\'{e}n\'{e}ralise une formule de Leopoldt qui relie le r\'{e}gulateur $p$-adique modulo $p$ d'une extension ab\'{e}lienne r\'{e}elle de $\mathbb{Q}$ avec la valeur de la fonction zeta de Dedekind relative en $s=2-p$. On utilise cette g\'{e}n\'{e}ralisation pour donner un \'{e}nonc\'{e} sur la non-annulation modulo $p$ de cette fonction zeta relative au point $s=1$ sous une faible condition.
\end{abstract}

Let $K/\mathbb{Q}$ be a real abelian extension  of degree $g$, class number $h$ and discriminant $d$. Denote by $\zeta_K$ the Dedekind zeta function of $K$ ($\zeta$ is the Riemann zeta function). In what follows, we will suppose that $p>3$ is a prime number. For any integer number $n\geq 1$, $B_{n,\chi}$ is the generalized Bernoulli number attached to $n$ and $\chi$. We fix once and for all an embedding of $\mathbb{Q}^{alg}$ into $\mathbb{Q}_p^{alg}$, where for any field $L$, $L^{alg}$ denotes an algebraic closure of $L$.

\section{The $p$-adic class number formula mod $p$}
Let $\chi$ be a primitive character of $\mathrm{Gal}(K/\mathbb{Q})$ of conductor $f_{\chi}$ and $\tau(\chi)$ its Gauss sum. In \cite{leopoldt}, Leopoldt introduced the following $p$-adic analogue of the complex value $L(1,\chi)$
$$
\mathcal{L}_p(\chi):=-\frac{\tau(\chi)}{f_{\chi}}\sum_{a=1}^{f_{\chi}}\overline{\chi}(a)\log_p\left(1-\xi^a\right),
$$
where $\xi$ is a primitive $f_{\chi}$-root of $1$ and $\log_p$ is the Iwasawa logarithm.
\begin{prop}[Leopoldt, \cite{leopoldt}, 1.8]Suppose that $p\nmid f_{\chi}$ and that $\chi(-1)=1$. Then
$$
\mathcal{L}_p(\chi)\equiv \overline{\chi(p)}\frac{B_{p-1,\chi}}{p-1}p\pmod{p^2}.
$$
\label{leopoldt1}
\end{prop}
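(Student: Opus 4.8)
The plan is to read off the congruence from the Kubota--Leopoldt $p$-adic $L$-function $L_p(s,\chi)$ attached to $\chi$, which is even, nontrivial (otherwise $\mathcal{L}_p(\chi)$ is not even defined), and of conductor $f_\chi$ prime to $p$; since $p\nmid f_\chi$, $\chi$ also has order prime to $p$. I will use three classical facts about $L_p(s,\chi)$. First, Iwasawa's evaluation at $s=1$: up to the Euler factor at $p$, the sum appearing in the definition of $\mathcal{L}_p(\chi)$ is exactly $L_p(1,\chi)$, i.e.
\[
L_p(1,\chi)=\Bigl(1-\tfrac{\chi(p)}{p}\Bigr)\,\mathcal{L}_p(\chi).
\]
Second, the interpolation formula $L_p(1-n,\chi)=-\bigl(1-\chi\omega^{-n}(p)\,p^{\,n-1}\bigr)\tfrac{B_{n,\chi\omega^{-n}}}{n}$ for every integer $n\ge1$, $\omega$ being the Teichm\"{u}ller character. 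Third, Iwasawa's theorem that $L_p(s,\chi)=G_\chi\bigl((1+p)^s-1\bigr)$ for a power series $G_\chi$ with coefficients in $\mathbb{Z}_p[\chi]$; since $(1+p)^s-1\in p\mathbb{Z}_p$ for all $s\in\mathbb{Z}_p$, this gives both that $L_p(s,\chi)\in\mathbb{Z}_p[\chi]$ and that the class of $L_p(s,\chi)$ modulo $p$ is independent of $s$.

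First I would take $n=p-1$ in the interpolation formula. Since $\omega$ has order $p-1$ we have $\chi\omega^{-(p-1)}=\chi$, so
\[
L_p(2-p,\chi)=-\bigl(1-\chi(p)\,p^{\,p-2}\bigr)\frac{B_{p-1,\chi}}{p-1}.
\]
By the third fact $L_p(2-p,\chi)\in\mathbb{Z}_p[\chi]$, and since $1-\chi(p)p^{\,p-2}\equiv1\pmod p$ is a unit, this forces $\tfrac{B_{p-1,\chi}}{p-1}\in\mathbb{Z}_p[\chi]$. Now the hypothesis $p>3$ enters: then $p-2\ge3$, so $p^{\,p-2}\equiv0\pmod{p^2}$, whence $L_p(2-p,\chi)\equiv-\tfrac{B_{p-1,\chi}}{p-1}\pmod{p^2}$. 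Combining with the constancy modulo $p$ of $s\mapsto L_p(s,\chi)$,
\[
L_p(1,\chi)\equiv L_p(2-p,\chi)\equiv-\frac{B_{p-1,\chi}}{p-1}\pmod p,\qquad L_p(1,\chi)\in\mathbb{Z}_p[\chi].
\]

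Finally I would feed this into the $s=1$ formula. Because $p\nmid f_\chi$, $\chi(p)$ is a root of unity, so $p-\chi(p)$ is a $p$-adic unit and
\[
\Bigl(1-\tfrac{\chi(p)}{p}\Bigr)^{-1}=\frac{p}{p-\chi(p)}\equiv-p\,\overline{\chi(p)}\pmod{p^2}.
\]
Writing $L_p(1,\chi)=-\tfrac{B_{p-1,\chi}}{p-1}+p\beta$ with $\beta\in\mathbb{Z}_p[\chi]$ and multiplying, the term $(-p\,\overline{\chi(p)})(p\beta)$ is divisible by $p^2$, so
\[
\mathcal{L}_p(\chi)=\Bigl(1-\tfrac{\chi(p)}{p}\Bigr)^{-1}L_p(1,\chi)\equiv(-p\,\overline{\chi(p)})\Bigl(-\frac{B_{p-1,\chi}}{p-1}\Bigr)=\overline{\chi(p)}\,\frac{B_{p-1,\chi}}{p-1}\,p\pmod{p^2},
\]
as claimed.

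The two heavy inputs (the value $L_p(1,\chi)$ and the integrality/analyticity of $L_p(s,\chi)$) are classical and I would simply cite them; the real content of the argument is the $p$-adic bookkeeping. The point I expect to be most delicate is that a congruence for $L_p(1,\chi)$ holding only modulo $p$ upgrades to a congruence for $\mathcal{L}_p(\chi)$ modulo $p^2$, precisely because the Euler factor $(1-\chi(p)/p)^{-1}$ carries a factor $p$; this, together with $p^{\,p-2}\equiv0\pmod{p^2}$ for $p>3$, is what pins down the mod-$p^2$ statement. If one wished to avoid invoking $L_p(1,\chi)$, an alternative is to expand $\log_p(1-\xi^a)\equiv\tfrac{(1-\xi^a)^{p^d-1}-1}{p^d-1}\pmod{p^2}$ (with $d$ the order of $p$ modulo $f_\chi$), apply the binomial theorem, and use the identity $\sum_a\overline{\chi}(a)\xi^{ak}=\chi(k)\tau(\overline{\chi})$ for primitive $\chi$ together with $\tau(\chi)\tau(\overline{\chi})=\chi(-1)f_\chi=f_\chi$; this reduces $\mathcal{L}_p(\chi)$ modulo $p^2$ to the finite sum $-\tfrac{1}{p^d-1}\sum_{k=1}^{p^d-1}\binom{p^d-1}{k}(-1)^k\chi(k)$, and the remaining obstacle on that route is to match this against $\overline{\chi(p)}\tfrac{B_{p-1,\chi}}{p-1}p$ through a Kummer-type congruence for generalized Bernoulli numbers.
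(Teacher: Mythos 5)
Your argument is correct, but there is no in-paper proof to compare it with: Proposition \ref{leopoldt1} is quoted from Leopoldt's article without proof, so you have supplied an argument where the paper gives none. Your route --- evaluate $L_p$ at $s=2-p$ by the interpolation formula, transport the value to $s=1$ using the integrality and mod-$p$ constancy coming from the Iwasawa power series (equivalently, the paper's Proposition \ref{padicstimation} with $s=p-1$), then divide by the Euler factor $1-\chi(p)/p$ --- is the modern one, and the bookkeeping is sound: $1-\chi(p)p^{p-2}$ is a unit, which forces $B_{p-1,\chi}/(p-1)$ to be integral; $p^{p-2}\equiv 0\pmod{p^2}$ for $p>3$; and the factor $p/(p-\chi(p))\equiv -p\,\overline{\chi(p)}\pmod{p^2}$ is exactly what upgrades a mod-$p$ congruence for $L_p(1,\chi)$ to a mod-$p^2$ congruence for $\mathcal{L}_p(\chi)$. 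Leopoldt's original derivation is more elementary and closer to the alternative you sketch at the end: he works directly with the finite sum defining $\mathcal{L}_p(\chi)$, replaces $\log_p(1-\xi^a)$ by $-pQ_p(1-\xi^a)$ modulo $p^2$ (the paper's Proposition \ref{fermatquotient}), and identifies the resulting character sum of Fermat quotients with $\overline{\chi(p)}B_{p-1,\chi}/(p-1)$ via a Kummer-type congruence; your route buys a much shorter argument at the price of invoking the full Kubota--Leopoldt/Iwasawa machinery as a black box. Two small remarks. The aside ``since $p\nmid f_\chi$, $\chi$ also has order prime to $p$'' is false (take $f_\chi=\ell$ prime with $\ell\equiv 1\pmod p$); it is never used, so it does no harm, but delete it. More substantively, your identity $L_p(1,\chi)=\bigl(1-\chi(p)/p\bigr)\mathcal{L}_p(\chi)$ is the correct classical evaluation, and the Euler factor is precisely what produces the factor $p$ on the right-hand side of the proposition; the paper's Theorem \ref{interpolationkubota}, which asserts $L_p(1;\chi)=\mathcal{L}_p(\chi)$ with no Euler factor, is literally inconsistent with Proposition \ref{leopoldt1} (the former would make $\mathcal{L}_p(\chi)$ a $p$-adic unit in general, the latter forces $p\mid\mathcal{L}_p(\chi)$), so your normalization is the one to trust.
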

The following definition is well known.
\begin{defn}[cf.\,Leopoldt,\,\cite{leopoldt}]For any $ z\in\mathbb{Z}_p^*$, the Fermat quotient of $z\pmod{p}$, denoted by $Q_p(z)$, is the reminder
of $(z^{p-1}-1)/p\pmod{p}$.
\end{defn}
The Fermat quotient satisfies the following property, whose proof is straightforward.
\begin{prop}For any $z\in\mathbb{Z}_p^*$,
$$
\log_p(z)\equiv -pQ_p(z)\pmod{p^2}.
$$
\label{fermatquotient}
\end{prop}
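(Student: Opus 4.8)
The plan is to reduce everything to the behaviour of the Iwasawa logarithm on principal units, where it is given by the familiar power series, by passing from $z$ to $z^{p-1}$. Two facts drive the argument: $z^{p-1}$ lies in $1+p\mathbb{Z}_p$, and $\log_p$ is a group homomorphism (killing roots of unity and $p$), so that $\log_p(z^{p-1})=(p-1)\log_p(z)$.

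First I would set $u:=(z^{p-1}-1)/p$, which lies in $\mathbb{Z}_p$ and satisfies $u\equiv Q_p(z)\pmod p$ by the definition of the Fermat quotient. Writing $z^{p-1}=1+pu$ and expanding,
$$
\log_p(z^{p-1})=\log_p(1+pu)=\sum_{k\ge 1}(-1)^{k-1}\frac{(pu)^k}{k},
$$
I would check that each term with $k\ge 2$ is divisible by $p^2$: the $p$-adic valuation of $(pu)^k/k$ is at least $k-v_p(k)$, and the elementary bound $v_p(k)\le \log_p k\le k-2$ for $k\ge 2$ (here $p\ge 3$ suffices) gives $k-v_p(k)\ge 2$. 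Hence $\log_p(z^{p-1})\equiv pu\pmod{p^2}$.

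Finally I would divide by $p-1$. Since $p-1$ is a unit of $\mathbb{Z}_p$ with $(p-1)^{-1}\equiv -1\pmod p$, multiplying the congruence $(p-1)\log_p(z)\equiv pu\pmod{p^2}$ by $(p-1)^{-1}$ and then reducing $(p-1)^{-1}$ modulo $p$ yields
$$
\log_p(z)\equiv (p-1)^{-1}pu\equiv -pu\equiv -pQ_p(z)\pmod{p^2},
$$
the last step using $u\equiv Q_p(z)\pmod p$. This is exactly the claimed congruence. There is no genuine obstacle: the only point demanding any care is the valuation estimate on the tail of the logarithmic series, which is routine, and in fact the hypothesis $p>3$ from the rest of the paper is not needed for this statement.
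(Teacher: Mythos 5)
Your proof is correct; the paper itself offers no argument for this proposition (it is declared ``straightforward''), and what you write is exactly the standard verification one would supply: pass to $z^{p-1}\in 1+p\mathbb{Z}_p$, truncate the logarithm series, and divide by the unit $p-1$, using $(p-1)^{-1}\equiv -1\pmod p$. The only cosmetic slip is the intermediate inequality $\log_p k\le k-2$, which fails at $k=2$; but since $v_p(2)=0$ for $p\ge 3$, the bound you actually need, $k-v_p(k)\ge 2$ for all $k\ge 2$, still holds, so the conclusion is unaffected.
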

Let $\{\varepsilon_k\}_{k=1}^{g-1}$ be a fundamental system of units of $\mathcal{O}_K$, the ring of integers of $K$, and $\{\sigma_j\}_{j=1}^g$ a set of real embeddings of $K$ into $\mathbb{Q}^{alg}$.
\begin{defn}[Leopoldt, \cite{leopoldt}] The $p$-adic regulator mod $p$ is
$$R^{(p)}(K)=\mbox{det}\left(Q_p\left(\sigma_j\left(\varepsilon_k\right)\right)\right)_{j,k=1}^{g-1}.$$
\end{defn}
As in the case of the standard $p$-adic regulator, which we denote by $R_p(K)$, it is easy to check that $R^{(p)}(K)$ is well defined up to a sign. The following theorem is stated in \cite{leopoldt} but it appears without a proof, which we give here.
\begin{thm}Let $K/\mathbb{Q}$ be a real abelian extension and let $p>3$ be a prime such that for any character $\chi$ of
$\mathrm{Gal}\left(K/\mathbb{Q}\right)$ of conductor $f_{\chi}$, $p\nmid f_{\chi}$. Then, with the above notations, there exists a fundamental system of units such that
$$
\frac{2^{g-1}hR^{(p)}(K)}{\sqrt{d}}\equiv\frac{\zeta_K(2-p)}{\zeta(2-p)}\,\pmod{p}.
$$
\label{classnumberzeta}
\end{thm}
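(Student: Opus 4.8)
The plan is to factor both sides over the characters of $G=\mathrm{Gal}(K/\mathbb{Q})$, replace the generalized Bernoulli numbers by Leopoldt's quantities $\mathcal{L}_p(\chi)$ via Proposition~\ref{leopoldt1}, recognise the resulting product as a $p$-adic regulator of cyclotomic units, read off the constant $2^{g-1}h$ by comparison with Dirichlet's classical class number formula, and finally use Proposition~\ref{fermatquotient} to pass from the $p$-adic regulator to $R^{(p)}(K)$.

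By Artin factorisation $\zeta_K(s)/\zeta(s)=\prod_{\chi\neq 1}L(s,\chi)$, the product running over the nontrivial characters of $G$ regarded as primitive Dirichlet characters, and specialising $L(1-n,\chi)=-B_{n,\chi}/n$ at $n=p-1$ gives $\zeta_K(2-p)/\zeta(2-p)=\prod_{\chi\neq 1}\bigl(-B_{p-1,\chi}/(p-1)\bigr)$. Since $K$ is totally real, $\chi(-1)=1$ for every $\chi$, and $p\nmid f_\chi$ by hypothesis, so Proposition~\ref{leopoldt1} applies to each $\chi$: it shows that $\mathcal{L}_p(\chi)/p$ is a $p$-adic integer and that $B_{p-1,\chi}/(p-1)\equiv\chi(p)\,\mathcal{L}_p(\chi)/p\pmod p$ (using $\chi(p)\overline{\chi(p)}=1$). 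Hence
\[
\frac{\zeta_K(2-p)}{\zeta(2-p)}\equiv(-1)^{g-1}\Big(\prod_{\chi\neq 1}\chi(p)\Big)\prod_{\chi\neq 1}\frac{\mathcal{L}_p(\chi)}{p}\pmod p,
\]
and since $\prod_{\chi\neq 1}\chi(p)=\pm 1$ (a product of characters of an abelian group evaluated at a fixed element), it suffices to prove that $\prod_{\chi\neq 1}\mathcal{L}_p(\chi)/p\equiv\pm\,2^{g-1}hR^{(p)}(K)/\sqrt d\pmod p$ for a suitable choice of fundamental system of units and of the square root of $d$; the remaining global sign is absorbed by that choice, which is exactly the ``there exists a fundamental system'' clause.

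The core is to evaluate $\prod_{\chi\neq 1}\mathcal{L}_p(\chi)$. Unfolding $\mathcal{L}_p(\chi)=-\frac{\tau(\chi)}{f_\chi}\sum_a\overline{\chi}(a)\log_p(1-\xi^a)$, each factor is, up to its Gauss sum, the $\overline{\chi}$-isotypic component of the vector of Iwasawa logarithms of a system of cyclotomic units of $K$ (the norm relations between $1-\xi_{f_\chi}$ and $1-\xi_{f_K}$ only contribute algebraic Euler factors at the ramified primes). Multiplying over $\chi\neq 1$: the factors $\tau(\chi)/f_\chi$ combine, through the conductor--discriminant formula $d=\prod_\chi f_\chi$ and the product relation $\prod_{\chi\neq 1}\tau(\chi)=\pm\sqrt d$, into $\pm 1/\sqrt d$; and Frobenius's group-determinant identity turns the product of the $\overline{\chi}$-components into an honest $(g-1)\times(g-1)$ determinant, namely the $p$-adic regulator $R_p(C)$ of the subgroup $C\subseteq\mathcal{O}_K^*$ generated by those cyclotomic units, so $\prod_{\chi\neq 1}\mathcal{L}_p(\chi)=\pm R_p(C)/\sqrt d$. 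Running the identical computation with $\log_p$ replaced by $\log|\cdot|$, the archimedean analogue reads $\prod_{\chi\neq 1}L(1,\chi)=\pm R_\infty(C)/\sqrt d$; comparing with Dirichlet's class number formula $\prod_{\chi\neq 1}L(1,\chi)=2^{g-1}hR_K/\sqrt d$ (totally real $K$, $w=2$) forces $[\mathcal{O}_K^*:C]=2^{g-1}h$ (in particular $C$ has finite index, so the determinant has full size $g-1$, and no appeal to Leopoldt's conjecture is needed). Since the $p$-adic regulator of a finite-index subgroup equals the index times $R_p(K)$, we obtain $\prod_{\chi\neq 1}\mathcal{L}_p(\chi)=\pm\,2^{g-1}hR_p(K)/\sqrt d$.

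To finish, note that each $\sigma_j(\varepsilon_k)$ is a local unit, so $\log_p\sigma_j(\varepsilon_k)\in p\mathcal{O}$ and $R_p(K)=p^{g-1}\widetilde R$ with $\widetilde R=\det\bigl(p^{-1}\log_p\sigma_j(\varepsilon_k)\bigr)_{j,k=1}^{g-1}$; by Proposition~\ref{fermatquotient} applied componentwise, $p^{-1}\log_p\sigma_j(\varepsilon_k)\equiv-Q_p(\sigma_j(\varepsilon_k))$ modulo $p$ (reading $Q_p$ through $\log_p$ at places where $p$ is not totally split), whence $\widetilde R\equiv(-1)^{g-1}R^{(p)}(K)\pmod p$, a congruence of elements of $\mathbb{F}_p$ by Galois invariance. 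Dividing the last identity by $p^{g-1}$ and substituting into the congruence of the second paragraph gives the assertion, up to the sign removed by the choice of fundamental system. I expect the main obstacle to be the bookkeeping inside the core step: carrying the Gauss-sum product, the power of $2$, and the ramified Euler factors through correctly, which is exactly where one relies on the formal identity between the $p$-adic and archimedean manipulations and on Dirichlet's classical formula to pin the constant down; by contrast the character decomposition and the passage to Fermat quotients are routine.
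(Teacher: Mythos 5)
Your argument has the same overall skeleton as the paper's proof: factor $\zeta_K/\zeta$ over the nontrivial characters, use Proposition~\ref{leopoldt1} to trade $L(2-p,\chi)=-B_{p-1,\chi}/(p-1)$ for $\mathcal{L}_p(\chi)/p$, identify $\prod_{\chi\neq 1}\mathcal{L}_p(\chi)$ with $2^{g-1}hR_p(K)/\sqrt{d}$, and pass from $R_p(K)$ to $(-p)^{g-1}R^{(p)}(K)$ entrywise via Proposition~\ref{fermatquotient}, absorbing the residual signs (including $\prod_{\chi\neq 1}\chi(p)=\pm 1$) into the choice of fundamental system. The one place you genuinely diverge is the middle step: the paper simply invokes the $p$-adic class number formula $2^{g-1}hR_p(K)/\sqrt{d}=\prod_{\chi\neq 1}\mathcal{L}_p(\chi)$ as a known theorem of Leopoldt, whereas you re-derive it by unfolding each $\mathcal{L}_p(\chi)$ into logarithms of cyclotomic numbers, assembling the product with the Frobenius group determinant and the Gauss-sum/conductor--discriminant identities, and pinning down the index $[\mathcal{O}_K^*:C]=2^{g-1}h$ by running the parallel archimedean computation against Dirichlet's formula. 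That is indeed the standard proof of the $p$-adic class number formula, and determining the index by comparison with the archimedean case is the right way to avoid a Sinnott-type index computation; but as written it is only a sketch --- the elements $1-\xi^a$ are not units of $K$ when $f_\chi$ is a prime power, so the subgroup $C$ must be built from the usual quotients and norm-compatible combinations, which is precisely the bookkeeping you flag as the main obstacle. Since the paper treats the $p$-adic class number formula as an external input, you could simply cite it, which collapses your argument to the paper's four-line proof. One caveat you share with the paper: $Q_p(\sigma_j(\varepsilon_k))$ and Proposition~\ref{fermatquotient} only literally apply when $\sigma_j(\varepsilon_k)\in\mathbb{Z}_p^*$, i.e.\ when $p$ splits completely in $K$; otherwise $Q_p$ has to be read through $-p^{-1}\log_p$ (or through $q=p^f$), as you note in passing.
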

\begin{proof}
From the very definition of $p$-adic regulator mod $p$, for any fundamental system of units we have that $R_p(K)\equiv(-p)^{g-1}R^{(p)}(K)\pmod{p^g}$. From the $p$-adic class number formula, there exists a fundamental system of units such that
$$
\dfrac{2^{g-1}hR_p(K)}{\sqrt{d}}=\prod\mathcal{L}_p(\chi),
$$
the product being taken over the set of non trivial characters of $\mathrm{Gal}(K/\mathbb{Q})$. Using Proposition \ref{leopoldt1}, we obtain that
\begin{equation}
\prod_{\chi\neq 1}\mathcal{L}_p\left(\chi\right)\equiv (-p)^{g-1}\prod_{\chi\neq 1}L\left(2-p;\chi\right)\pmod{p^g}.
\label{congruenciapadic2}
\end{equation}
But the left hand side of \eqref{congruenciapadic2} is congruent to $\dfrac{(-p)^{g-1}2^{g-1}hR^{(p)}(K)}{\sqrt{d}}\pmod{p^g}$.
\end{proof}
\begin{rem}In formula (3.8) of \cite{leopoldt}, the right hand side of the congruence appears with a potentially distinct sign. This is no contradiction, since regulators are defined up to a sign.
\end{rem}
\section{A $p$-adic class number formula mod $p^n$ and special values of $p$-adic relative zeta functions}
Given $z\in\mathbb{Z}_p^*$, write $z=\omega(z)\langle z\rangle$, where $\omega(z)$ is the unique $(p-1)$-th root of $1$ which is congruent to $z$ modulo $p$. Denote by $Q_{p,n}(z)$ the truncation of the series
$\frac{-1}{p}\mathrm{log}_p(1+p\tilde{z})\pmod{p^{n+1}}$.
\begin{defn}Let $\{\varepsilon_k\}_{k=1}^{g-1}$ be a fundamental system of units of $K$ and $\{\sigma_j\}_{j=1}^g$ a set of embeddings of $K$ into $\mathbb{Q}^{alg}$. The $p$-adic regulator modulo $p^n$ is
$$
R^{(p,n)}(K)=\mathrm{det}\left(Q_{p,n}\left(\sigma_j(\varepsilon_k)\right)\right)_{1\leq j,k\leq g-1}.
$$
\end{defn}
It is also easy to prove that $R^{(p,n)}(K)$ is also well defined up to a sign. The following lemma is obvious from the definition of $Q_{p,n}$.
\begin{lem}For any integer $n\geq 1$, $-pQ_{p,n}(z)\equiv\mathrm{log}_p(z)\pmod{p^{n+2}}$.
\label{fermatquotient2}
\end{lem}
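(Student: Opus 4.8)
The plan is to unwind the definitions, which is all that is needed here. First I would fix $z\in\mathbb{Z}_p^{*}$, write $z=\omega(z)\langle z\rangle$, and set $\tilde z:=(\langle z\rangle-1)/p$; this lies in $\mathbb{Z}_p$ because $\langle z\rangle\equiv 1\pmod p$, so that $\langle z\rangle=1+p\tilde z$. Since $\omega(z)$ is a $(p-1)$-th root of unity and the Iwasawa logarithm vanishes on roots of unity, we get $\log_p(z)=\log_p(\omega(z))+\log_p(1+p\tilde z)=\log_p(1+p\tilde z)$.

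Next I would record the elementary estimate $v_p(p^{m}/m)=m-v_p(m)\geq 1$ for every integer $m\geq 1$. Applied to $\log_p(1+p\tilde z)=\sum_{m\geq1}(-1)^{m-1}(p\tilde z)^{m}/m$, it shows that $\frac{-1}{p}\log_p(1+p\tilde z)=-\sum_{m\geq1}(-1)^{m-1}p^{m-1}\tilde z^{m}/m$ has all its coefficients in $\mathbb{Z}_p$; hence dividing by $p$ is legitimate, the truncation of this series modulo $p^{n+1}$ (namely $Q_{p,n}(z)$) is well defined, and
$$
Q_{p,n}(z)\equiv\frac{-1}{p}\log_p(1+p\tilde z)\pmod{p^{n+1}}.
$$

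Finally, multiplying the last congruence by $-p$ upgrades it to a congruence modulo $p^{n+2}$ and yields $-pQ_{p,n}(z)\equiv\log_p(1+p\tilde z)=\log_p(z)\pmod{p^{n+2}}$, which is the assertion; specialising $n=1$ recovers a sharpening of Proposition \ref{fermatquotient}. There is no real obstacle in this argument: the single point that deserves a line of justification is the $p$-integrality of $\frac1p\log_p(1+p\tilde z)$, equivalently the inclusion $\log_p(1+p\mathbb{Z}_p)\subseteq p\mathbb{Z}_p$ valid for $p>3$, and once that is noted the statement follows immediately from the definition of $Q_{p,n}$.
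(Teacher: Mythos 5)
Your argument is correct and is precisely the definitional unwinding that the paper has in mind when it declares the lemma ``obvious from the definition of $Q_{p,n}$'': the vanishing of $\log_p$ on roots of unity, the $p$-integrality of $\frac{1}{p}\log_p(1+p\tilde z)$ via $v_p(p^m/m)\geq 1$, and the multiplication of the mod-$p^{n+1}$ congruence by $-p$. The only cosmetic slip is that the inclusion $\log_p(1+p\mathbb{Z}_p)\subseteq p\mathbb{Z}_p$ does not actually require $p>3$; that hypothesis is simply inherited from the paper's standing assumptions and is harmless here.
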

As a direct consequence of this lemma, we have the following result.
\begin{prop}For any integer $n\geq 1$, there exists a fundamental system of units such that $R_p(K)\equiv (-p)^{g-1}R^{(p,n)}(K)\pmod{p^{n+g}}$.
\label{fermatquotient3}
\end{prop}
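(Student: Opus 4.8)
The plan is to run exactly the argument from the proof of Theorem~\ref{classnumberzeta}, but with the crude congruence $\log_p(z)\equiv-pQ_p(z)\pmod{p^2}$ of Proposition~\ref{fermatquotient} replaced by the sharper one of Lemma~\ref{fermatquotient2}. I would fix any fundamental system of units $\{\varepsilon_k\}_{k=1}^{g-1}$ and any ordered choice of $g-1$ embeddings, and compute \emph{both} $R_p(K)=\det\bigl(\log_p(\sigma_j(\varepsilon_k))\bigr)_{1\le j,k\le g-1}$ and $R^{(p,n)}(K)=\det\bigl(Q_{p,n}(\sigma_j(\varepsilon_k))\bigr)_{1\le j,k\le g-1}$ from these same data; then the two determinants are normalised simultaneously, there is no residual sign ambiguity between them, and the congruence will in fact come out for \emph{every} such choice, so the phrase ``there exists a fundamental system'' is obtained for free.

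First I would note that, since $\varepsilon_k\in\mathcal{O}_K^{*}$ and $p$ is unramified in $K$, each $\sigma_j(\varepsilon_k)$ is a unit in the ring of integers $W$ of a suitable finite unramified extension $F/\mathbb{Q}_p$ inside $\mathbb{Q}_p^{alg}$ containing all the $\sigma_j(K)$. In particular $Q_{p,n}(\sigma_j(\varepsilon_k))\in W$, because the power series defining $Q_{p,n}$, namely $-\tfrac1p\log_p(1+pt)=\sum_{m\ge1}(-1)^{m}p^{m-1}t^{m}/m$, has $p$-integral coefficients. Lemma~\ref{fermatquotient2} then lets me write, for all $j,k$,
\[
\log_p\bigl(\sigma_j(\varepsilon_k)\bigr)=-p\,Q_{p,n}\bigl(\sigma_j(\varepsilon_k)\bigr)+p^{n+2}\eta_{jk},\qquad \eta_{jk}\in W,
\]
the integrality of $\eta_{jk}$ being automatic from the two preceding facts. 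Pulling a factor $-p$ out of each column, so that the $(j,k)$-entry becomes $-p\bigl(Q_{p,n}(\sigma_j(\varepsilon_k))-p^{n+1}\eta_{jk}\bigr)$, gives
\[
R_p(K)=(-p)^{g-1}\det\Bigl(Q_{p,n}\bigl(\sigma_j(\varepsilon_k)\bigr)-p^{n+1}\eta_{jk}\Bigr)_{1\le j,k\le g-1}.
\]

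To finish, I would expand this determinant by multilinearity in the columns: with $A=\bigl(Q_{p,n}(\sigma_j(\varepsilon_k))\bigr)$ and $E=(\eta_{jk})$ one has $\det(A-p^{n+1}E)=\det A+\sum_{\varnothing\ne S}(-p^{n+1})^{|S|}D_S$, where $D_S$ is the determinant of the matrix obtained from $A$ by replacing the columns indexed by $S$ with those of $E$. Each $D_S$ lies in $W$, so the whole correction is $p^{n+1}\beta$ with $\beta\in W$, while $\det A=R^{(p,n)}(K)$; hence
\[
R_p(K)=(-p)^{g-1}R^{(p,n)}(K)+(-p)^{g-1}p^{n+1}\beta ,
\]
and the last term lies in $p^{n+g}W$, which is the asserted congruence.

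The computation is routine — it really is ``a direct consequence'' of Lemma~\ref{fermatquotient2} — and the only point that deserves a moment's attention is the $p$-adic bookkeeping: one must be sure that all entries $Q_{p,n}(\sigma_j(\varepsilon_k))$ and all error terms $\eta_{jk}$ are $p$-adic integers, so that the multilinear correction genuinely gains the factor $p^{n+1}$. This is where the hypotheses enter, through the $p$-integrality of the coefficients of $-\tfrac1p\log_p(1+pt)$, the unramifiedness of $p$ in $K$ (so that $v_p$ is an honest valuation on $F$ and ``$\equiv\pmod{p^{n+2}}$'' in Lemma~\ref{fermatquotient2} means valuation $\ge n+2$), and the trivial remark that units map to units under each $\sigma_j$; granting these there is no real obstacle.
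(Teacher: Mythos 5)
Your proof is correct and is exactly the routine verification the paper leaves out: the paper simply declares the proposition ``a direct consequence'' of Lemma~\ref{fermatquotient2} and prints $\Box$ with no argument, while you supply the intended computation (pull $-p$ out of each column, expand $\det(A-p^{n+1}E)$ by multilinearity, and check that all entries and error terms are $p$-integral so the correction lands in $p^{n+g}W$). Your remark that the congruence holds for \emph{every} simultaneous choice of units and embeddings, so the ``there exists'' is automatic, is also accurate and slightly sharper than the statement as written.
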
$\Box$

Let $L_p(s;\chi)$ be the Kubota-Leopoldt $p$-adic $L$-function attached to $\chi$ (cf. \cite{kubota1}, \cite{iwasawa}).

\begin{prop}[Kubota-Leopoldt,\;\cite{kubota1}] Let $\chi$ be a character of $\mathrm{Gal}(K/\mathbb{Q})$ of conductor $f_{\chi}$. Let $p>3$ be a prime and suppose that $p\nmid f_{\chi}$. Then, for any $s\in\mathbb{Z}_p$,
$$
|L_p(1;\chi)-L_p(1-s;\chi)|_p\leq|ps|_p.
$$
\label{padicstimation}
\end{prop}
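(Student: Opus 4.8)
The plan is to deduce the estimate from the Iwasawa power-series expansion of $L_p(s;\chi)$ together with an elementary computation of the $p$-adic absolute value of $u^s-1$ for a topological generator $u$ of $1+p\mathbb{Z}_p$. Note first that we must assume $\chi\neq 1$: for the trivial character $L_p(s;1)$ has a simple pole at $s=1$, so $L_p(1;\chi)$ is only defined — and the inequality is only needed — when $\chi$ is non-trivial. Write $\mathcal{O}_\chi$ for the ring of integers of $\mathbb{Q}_p(\chi)$. Since $p\nmid f_\chi$ and $\chi\neq 1$, the classical construction of Iwasawa (cf.\ \cite{iwasawa}, \cite{kubota1}) produces a topological generator $u$ of $1+p\mathbb{Z}_p$, for instance $u=1+p$, and a power series $G_\chi(T)=\sum_{n\geq 0}a_n T^n\in\mathcal{O}_\chi[[T]]$ — crucially with \emph{integral} coefficients, which is exactly where $\chi\neq 1$ is used — such that
\[
L_p(1-s;\chi)=G_\chi\!\left(u^s-1\right)\qquad\text{for all }s\in\mathbb{Z}_p.
\]
(If one starts instead from the normalization $L_p(s;\chi)=f\big((1+p)^s-1,\chi\big)$, the change of variables $s\mapsto 1-s$ together with the substitution $T\mapsto(p-T)/(1+T)$ turns $f$ into such a $G_\chi$, keeping $u=1+p$.)

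Granting this, take $s=0$ to get $L_p(1;\chi)=G_\chi(0)=a_0$, whence
\[
L_p(1;\chi)-L_p(1-s;\chi)=a_0-G_\chi\!\left(u^s-1\right)=-(u^s-1)\sum_{n\geq 1}a_n\left(u^s-1\right)^{n-1}.
\]
This manipulation is legitimate because $u^s\in 1+p\mathbb{Z}_p$ forces $|u^s-1|_p\leq p^{-1}<1$, so the series converges. Since each $a_n\in\mathcal{O}_\chi$ and $u^s-1\in p\mathbb{Z}_p\subseteq\mathcal{O}_\chi$, the factor $\sum_{n\geq 1}a_n(u^s-1)^{n-1}$ lies in $\mathcal{O}_\chi$ and therefore has $p$-adic absolute value at most $1$. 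Hence $|L_p(1;\chi)-L_p(1-s;\chi)|_p\leq|u^s-1|_p$, and the whole problem is reduced to identifying this last quantity.

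The second step is the elementary fact that $|u^s-1|_p=|ps|_p$ for every $s\in\mathbb{Z}_p$. It is trivial for $s=0$. For $s\neq 0$ write $u^s=\exp(s\log u)$; because $p>3$ the tail $\sum_{k\geq 2}(\pm p^k/k)$ of $\log(1+p)$ has valuation $>1$, so $\log u=p\eta$ with $\eta\in\mathbb{Z}_p^{\times}$, and
\[
u^s-1=\exp(sp\eta)-1=sp\eta+\sum_{n\geq 2}\frac{(sp\eta)^n}{n!}.
\]
Each term with $n\geq 2$ has valuation $n\big(v_p(s)+1\big)-v_p(n!)>v_p(s)+1=v_p(ps)$ (using $v_p(n!)\leq\tfrac{n-1}{p-1}<n-1$ for $p$ odd and $n\geq 2$), so $v_p(u^s-1)=v_p(ps)$. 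Combining with the previous paragraph gives $|L_p(1;\chi)-L_p(1-s;\chi)|_p\leq|ps|_p$.

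The only genuinely non-formal ingredient is the existence of the \emph{integral} Iwasawa series $G_\chi$ for non-trivial $\chi$; once it is available, the rest is bookkeeping with convergent $p$-adic series. An alternative approach would be to prove the congruence first at the integers $s=n\geq 1$ from Kummer-type congruences for the generalized Bernoulli numbers $B_{n,\chi\omega^{-n}}$ occurring in the interpolation formula for $L_p$, and then pass to all $s\in\mathbb{Z}_p$ by density and continuity; but this amounts to re-deriving the power-series expansion, so the route above seems cleanest.
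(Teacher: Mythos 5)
The paper does not actually prove this proposition: it is stated with a box and attributed to Kubota--Leopoldt, so there is no internal proof to compare yours against. Your argument is correct and is the standard one. For $\chi\neq 1$ even with $p\nmid f_{\chi}$, the Iwasawa power series $G_\chi(T)=\sum_{n\geq 0}a_nT^n\in\mathcal{O}_\chi[[T]]$ with $L_p(1-s;\chi)=G_\chi(u^s-1)$, $u=1+p$, does exist (cf.\ \cite{iwasawa}); the factorization $G_\chi(0)-G_\chi(u^s-1)=-(u^s-1)\sum_{n\geq 1}a_n(u^s-1)^{n-1}$ has its cofactor in $\mathcal{O}_\chi$ because $u^s-1\in p\mathbb{Z}_p$, giving $|L_p(1;\chi)-L_p(1-s;\chi)|_p\leq|u^s-1|_p$; and your $\exp/\log$ computation of $v_p(u^s-1)=v_p(s)+1$ is valid for odd $p$ (the convergence condition $v_p(sp\eta)>1/(p-1)$ holds, and the $n\geq 2$ terms are strictly dominated). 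Two remarks. First, your restriction to $\chi\neq 1$ is not a mere convenience but a genuine correction to the statement as printed: $L_p(s;1)$ has a pole at $s=1$, so the inequality is meaningless for the trivial character; fortunately the paper only ever applies the proposition inside products over $\chi\neq 1$, so nothing downstream breaks. Second, an equivalent and slightly shorter route, closer to what Kubota and Leopoldt actually establish, is the expansion $L_p(s;\chi)=\sum_{n\geq 0}c_n(s-1)^n$ with $|c_n|_p\leq|p|_p$ for $n\geq 1$, which yields the bound in one line; but this rests on the same integrality input, so it buys nothing your argument does not already deliver.
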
$\Box$

\begin{thm}[Leopoldt, \cite{kubota2}] $L_p(1;\chi)=\mathcal{L}_p(\chi)$.
\label{interpolationkubota}
\end{thm}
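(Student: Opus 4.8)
Both quantities ought to be read off from the characterising interpolation property of the Kubota--Leopoldt function, so the plan is to realise $L_p(s;\chi)$ as a $p$-adic Mellin transform of a measure, to differentiate that expression at $s=1$, and then to recognise the resulting $p$-adic integral as the finite logarithmic sum defining $\mathcal{L}_p(\chi)$. Throughout one may assume, as in Proposition~\ref{leopoldt1}, that $\chi$ is non-trivial and even and that its conductor $f=f_\chi$ is prime to $p$.

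First I would invoke the Iwasawa--Mazur construction of $L_p(s;\chi)$: there is a $\mathbb{Z}_p[\chi]$-valued measure $\mu_\chi$ on $\mathbb{Z}_p^\times$, coming from a Bernoulli (Stickelberger) distribution modulo a suitable $F$ divisible by $f$ and by $p$, twisted by $\chi$ and restricted to $\mathbb{Z}_p^\times$, such that
$$(s-1)\,L_p(s;\chi)=\int_{\mathbb{Z}_p^\times}\langle x\rangle^{1-s}\,d\mu_\chi(x)\qquad(s\in\mathbb{Z}_p),$$
the interpolation at $s=1-n$ returning $-B_{n,\chi\omega^{-n}}/n$ up to the Euler factor at $p$. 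Because $\chi\neq1$ the right-hand side vanishes at $s=1$: its value there is $\mu_\chi(\mathbb{Z}_p^\times)=\lim_{s\to1}(s-1)L_p(s;\chi)=0$. Dividing by $s-1$, passing to the limit $s\to1$ and using $\frac{d}{ds}\langle x\rangle^{1-s}\big|_{s=1}=-\log_p\langle x\rangle$, I obtain
$$L_p(1;\chi)=-\int_{\mathbb{Z}_p^\times}\log_p\langle x\rangle\,d\mu_\chi(x).$$
Then I would unfold this integral: since $x=\omega(x)\langle x\rangle$ gives $\log_p x=\log_p\langle x\rangle$, one may replace $\log_p\langle x\rangle$ by $\log_p x$, approximate $\log_p$ on $1+p\mathbb{Z}_p$ by its partial sums, and feed in the values of $\mu_\chi$ on residue classes modulo $F$; the integral then collapses to a finite $\overline{\chi}$-twisted combination of Iwasawa logarithms of the cyclotomic numbers $1-\xi^a$, $1\le a\le f$. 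The Gauss sum $\tau(\chi)$ enters at this stage through the identity $\chi(n)=\frac{\tau(\chi)}{f}\sum_a\overline{\chi}(a)\xi^{an}$, exactly as in the classical computation $L(1,\chi)=-\frac{\tau(\chi)}{f}\sum_a\overline{\chi}(a)\log(1-\xi^a)$, and carrying the same manipulation through $p$-adically should produce the asserted identity.

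The hard part will be this final identification. One must (i) justify interchanging $\log_p$ with the $p$-adic integral --- harmless once the root-of-unity factor $\omega$ has been split off, by uniform continuity of $\log_p$ on $1+p\mathbb{Z}_p$ --- and (ii) keep precise track of the normalising constants, above all of the Euler factor at $p$ produced by integrating over $\mathbb{Z}_p^\times$ rather than $\mathbb{Z}_p$; it is here that the reindexing $a\mapsto pa$, which turns $\sum_a\overline{\chi}(a)\log_p(1-\xi^a)$ into $\chi(p)$ times itself, is used to reconcile the two sides in the normalisation employed. An alternative and arguably more transparent route to step (ii) goes through Coleman's theory: the cyclotomic units $1-\xi^a$ generate norm-coherent systems along the cyclotomic $\mathbb{Z}_p$-extension, with Coleman power series (Frobenius twists of) $1-\xi^a(1+T)$, so that feeding these into the Coates--Wiles reciprocity homomorphism and summing against the weights $\overline{\chi}(a)$ identifies $L_p(1;\chi)$ with $-\frac{\tau(\chi)}{f}\sum_a\overline{\chi}(a)\log_p(1-\xi^a)$ directly. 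As a final sanity check one can recover the congruence modulo $p$ by combining Proposition~\ref{padicstimation} (taken with $s=p-1$, which reduces matters to $L_p(2-p;\chi)$) with the interpolation formula and Proposition~\ref{leopoldt1}.
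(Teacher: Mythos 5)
First, a point of reference: the paper does not prove this statement at all --- it is quoted from Leopoldt \cite{kubota2} and closed with a box --- so there is no argument of the author's to compare yours with; you are competing with the literature. Your outline (realise $L_p(s;\chi)$ as the Mellin transform of a Stickelberger-type measure on $\mathbb{Z}_p^\times$, differentiate at $s=1$, and unfold the integral of $\log_p\langle x\rangle$ into a finite $\overline{\chi}$-twisted sum of logarithms of cyclotomic numbers) is the standard modern route to Leopoldt's formula, essentially the proof of Theorem 5.18 in Washington's book, so the strategy is sound as far as it goes. But it is only a strategy: the entire content of the theorem sits in the step you yourself label ``the hard part'', namely the collapse of the $p$-adic integral to $\sum_a\overline{\chi}(a)\log_p(1-\xi^a)$, and neither of the two routes you propose for it (partial sums of $\log_p$ fed into the values of $\mu_\chi$ on residue classes, or Coleman power series and the Coates--Wiles homomorphism) is carried out.

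More seriously, the identity this method actually produces is
$$
L_p(1;\chi)=-\Bigl(1-\frac{\chi(p)}{p}\Bigr)\frac{\tau(\chi)}{f_\chi}\sum_{a=1}^{f_\chi}\overline{\chi}(a)\log_p(1-\xi^a)=\Bigl(1-\frac{\chi(p)}{p}\Bigr)\mathcal{L}_p(\chi),
$$
and the Euler factor $1-\chi(p)/p$ has $p$-adic absolute value $p$, so no choice of normalisation absorbs it. Your step (ii) has the mechanism backwards: the reindexing $a\mapsto pa$, which yields $\sum_a\overline{\chi}(a)\log_p(1-\xi^{pa})=\chi(p)\sum_a\overline{\chi}(a)\log_p(1-\xi^{a})$, is precisely what \emph{creates} the factor $1-\chi(p)/p$ when the terms with $p\mid a$ are discarded in restricting the measure to $\mathbb{Z}_p^\times$; it cannot cancel it. Your own proposed sanity check, if executed, exposes the discrepancy: Proposition~\ref{padicstimation} with $s=p-1$ and the interpolation formula give $L_p(1;\chi)\equiv -B_{p-1,\chi}/(p-1)\pmod{p}$, whereas Proposition~\ref{leopoldt1} gives $\mathcal{L}_p(\chi)\equiv \overline{\chi(p)}\,p\,B_{p-1,\chi}/(p-1)\equiv 0\pmod{p}$ (since $B_{p-1,\chi}$ is $p$-integral for $\chi\neq 1$ with $p\nmid f_\chi$); the two are reconciled exactly by the factor $(1-\chi(p)/p)$. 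So either the theorem as transcribed (and hence your target) is missing the Euler factor, or $\mathcal{L}_p(\chi)$ is meant to carry it; you must pin down this normalisation before the computation, because the identity as stated is not the one your argument can reach.
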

$\Box$

Now, we state and prove our main result.

\begin{thm}Let $K/\mathbb{Q}$ be a real abelian extension and $p>3$ an unramified  prime for $K$ such that for any character $\chi$ of
$\mathrm{Gal}\left(K/\mathbb{Q}\right)$, $p\nmid f_{\chi}$. Then, there exists a fundamental system of units in $\mathcal{O}_K^*$ such that for any $n\geq 1$,
$$
\frac{2^{g-1}hR^{(p,n)}(K)}{\sqrt{d}}\equiv\frac{\zeta_K(1-p^n(p-1))}{\zeta(1-p^n(p-1))}\,\pmod{p^{n+1}}.
$$
\label{classnumberzeta2}
\end{thm}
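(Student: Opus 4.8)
The plan is to imitate, step for step, the proof of Theorem~\ref{classnumberzeta}, raising the precision throughout: wherever that argument uses the mod~$p$ regulator $R^{(p)}(K)$, the point $2-p=1-p^{0}(p-1)$, and congruences modulo $p$ and $p^{g}$, I would use instead the refined regulator $R^{(p,n)}(K)$, the point $1-p^{n}(p-1)$, and congruences modulo $p^{n+1}$ and $p^{n+g}$.

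First I would fix a fundamental system of units for which the $p$-adic class number formula reads $\dfrac{2^{g-1}hR_p(K)}{\sqrt d}=\prod_{\chi\neq1}\mathcal{L}_p(\chi)$. For this system Proposition~\ref{fermatquotient3} gives $R_p(K)\equiv(-p)^{g-1}R^{(p,n)}(K)\pmod{p^{n+g}}$; since $p$ is unramified for $K$ the discriminant term $\sqrt d$ is a $p$-adic unit and $h\in\mathbb{Z}$, so one obtains
$$\prod_{\chi\neq1}\mathcal{L}_p(\chi)\equiv(-p)^{g-1}\,\frac{2^{g-1}hR^{(p,n)}(K)}{\sqrt d}\pmod{p^{n+g}}.$$
As $\prod_{\chi\neq1}L\big(1-p^{n}(p-1);\chi\big)=\zeta_K\big(1-p^{n}(p-1)\big)\big/\zeta\big(1-p^{n}(p-1)\big)$, the theorem will follow once one proves
$$\prod_{\chi\neq1}\mathcal{L}_p(\chi)\equiv(-p)^{g-1}\prod_{\chi\neq1}L\big(1-p^{n}(p-1);\chi\big)\pmod{p^{n+g}},$$
for then, dividing by $(-p)^{g-1}$ (both sides being divisible by $p^{g-1}$) and absorbing the sign $\prod_{\chi\neq1}\overline{\chi(p)}=\pm1$ into the sign ambiguity of the regulator, exactly as in \cite{leopoldt}, one gets the stated congruence modulo $p^{n+1}$.

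The heart of the argument is thus the generalisation of Proposition~\ref{leopoldt1} to the point $1-p^{n}(p-1)$:
$$\mathcal{L}_p(\chi)\equiv\overline{\chi(p)}\,\frac{B_{p^{n}(p-1),\chi}}{p^{n}(p-1)}\,p\pmod{p^{n+2}}\qquad(\chi\neq1).$$
To prove this I would write $\mathcal{L}_p(\chi)=L_p(1;\chi)$ by Theorem~\ref{interpolationkubota}, apply Proposition~\ref{padicstimation} with $s=p^{n}(p-1)$ (noting $|p\cdot p^{n}(p-1)|_p=|p^{n+1}|_p$) to compare $L_p(1;\chi)$ with the special value $L_p\big(1-p^{n}(p-1);\chi\big)$, and evaluate the latter by the interpolation of the Kubota--Leopoldt function at the negative integer $1-p^{n}(p-1)$: since $p^{n}(p-1)\equiv0\pmod{p-1}$ the $\omega$-twist is trivial and only $-B_{p^{n}(p-1),\chi}/(p^{n}(p-1))$ survives, up to the Euler factor $1-\chi(p)\,p^{\,p^{n}(p-1)-1}$, which, as $p^{n}(p-1)-1\ge n+1$ for $p>3$ and $n\ge1$, is congruent to $1$ modulo a power of $p$ far beyond $p^{n+2}$ and therefore harmless. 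Granting the per-character congruence, I would substitute it into the product: each $\mathcal{L}_p(\chi)$ is divisible by $p$ (Proposition~\ref{leopoldt1}), so a clean factor $(-p)^{g-1}$ comes out; a term in the expansion of the product that uses the error term (divisible by $p^{n+2}$) in one factor and the remaining $g-2$ factors (each of $p$-order $\ge1$) has $p$-order $\ge n+g$, and likewise for errors in more factors, so the product congruence modulo $p^{n+g}$ drops out, with main term $(-p)^{g-1}\prod_{\chi\neq1}\overline{\chi(p)}L\big(1-p^{n}(p-1);\chi\big)$.

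The step I expect to be the real obstacle is pinning down the modulus in this generalised Leopoldt congruence. A direct appeal to Proposition~\ref{padicstimation} only yields a comparison of $L_p(1;\chi)$ with $L_p\big(1-p^{n}(p-1);\chi\big)$ modulo $p^{n+1}$, whereas the product step genuinely needs modulus $p^{n+2}$ per character (for $g=2$ there is a single factor and no slack). Recovering the missing power of $p$ should come from combining the divisibility $p\mid\mathcal{L}_p(\chi)$ furnished by Proposition~\ref{leopoldt1} with the special arithmetic of the point $1-p^{n}(p-1)$ --- in effect, a sharper Kummer-type congruence for the $p$-deflated quantity $\mathcal{L}_p(\chi)/p$ --- and this, together with the careful $p$-adic valuation bookkeeping in passing from the per-character congruence to the $(g-1)$-fold product, is where the work lies; the remaining manipulations are purely formal.
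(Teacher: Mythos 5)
Your skeleton is the same as the paper's: Proposition \ref{fermatquotient3} for the regulator, the $p$-adic class number formula, and Proposition \ref{padicstimation} together with Theorem \ref{interpolationkubota} to move from $s=1$ to $s=1-p^{n}(p-1)$, followed by the product over $\chi\neq 1$ and division by $p^{g-1}$. But the proposal is not a proof: you leave open precisely the per-character congruence $\mathcal{L}_p(\chi)\equiv\overline{\chi(p)}\,p\,B_{s,\chi}/s\pmod{p^{n+2}}$ (with $s=p^{n}(p-1)$) that carries all the content, and the route you sketch for it cannot close the gap. The missing factor of $p$ does not come from any ``sharper Kummer-type congruence for $\mathcal{L}_p(\chi)/p$''; it comes from the Euler factor at $p$ in Leopoldt's formula, $L_p(1;\chi)=\left(1-\chi(p)p^{-1}\right)\mathcal{L}_p(\chi)$ (the statement of Theorem \ref{interpolationkubota} in the paper suppresses this factor, which is why the factor $p^{g-1}$ in \eqref{congruenciapadic1} looks unmotivated). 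Inverting it gives
$$
\mathcal{L}_p(\chi)=\frac{-p\,\overline{\chi(p)}}{1-p\,\overline{\chi(p)}}\,L_p(1;\chi),
$$
so the clean factor $-p\,\overline{\chi(p)}$ you want to extract is accompanied by the unit $1+p\,\overline{\chi(p)}+p^{2}\overline{\chi(p)}^{\,2}+\cdots$. Since $v_p(\mathcal{L}_p(\chi))=1$ in the generic case, the term $p^{2}\overline{\chi(p)}^{\,2}L_p(1;\chi)$ has exact order $2$ in $p$, and your proposed generalisation of Proposition \ref{leopoldt1} therefore fails modulo $p^{n+2}$ for every $n\geq 1$ unless $B_{s,\chi}/s\equiv 0\pmod{p^{n}}$. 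It is false as written, not merely unproved; for $n=0$ it degenerates to Proposition \ref{leopoldt1}, which is why the analogy with Theorem \ref{classnumberzeta} is misleading here.

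What the correct bookkeeping gives is $\mathcal{L}_p(\chi)\equiv\dfrac{-p\,\overline{\chi(p)}}{1-p\,\overline{\chi(p)}}\,L\left(1-p^{n}(p-1);\chi\right)\pmod{p^{n+2}}$, and after forming the product and dividing by $(-p)^{g-1}$ one is left with the extra unit $\prod_{\chi\neq 1}\overline{\chi(p)}\left(1-p\,\overline{\chi(p)}\right)^{-1}$. Modulo $p$ this is the sign $\prod_{\chi\neq 1}\overline{\chi(p)}=\pm 1$ that you rightly absorb into the regulator's sign ambiguity, but modulo $p^{n+1}$ with $n\geq 1$ the denominator $\prod_{\chi\neq 1}(1-p\,\overline{\chi(p)})$ does not reduce to $\pm1$ and cannot be discarded. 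So the obstacle you flag at the end is real and is located exactly where you suspect, but it is not removable by sharpening Proposition \ref{padicstimation}: any complete argument must track this Euler factor explicitly. (The paper's own derivation of \eqref{congruenciapadic1} asserts the congruence modulo $p^{n+g}$ directly from Proposition \ref{padicstimation} and Theorem \ref{interpolationkubota} and is silent on the same point, so your proposal has in effect isolated the step that the paper does not justify.)
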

\begin{proof}Given an integer $n\geq 1$, by using Lemma \ref{fermatquotient2} we have that $R_p(K)\equiv
(-p)^{g-1}R^{(p,n)}(K)\pmod{p^{n+g}}$. By using Proposition \ref{padicstimation} and Theorem \ref{interpolationkubota}, we obtain
\begin{equation}
\prod_{\chi\neq 1}\mathcal{L}_p\left(\chi\right)\equiv p^{g-1}\prod_{\chi\neq
1}L\left(1-p^n(p-1);\chi\right)\pmod{p^{n+g}}.
\label{congruenciapadic1}
\end{equation}
By the $p$-adic class number formula, the left hand side of \eqref{congruenciapadic1} equals $\dfrac{2^{g-1}hR_p(K)}{\sqrt{d}}$, which is congruent to
$\dfrac{(-p)^{g-1}2^{g-1}hR^{(p,n)}(K)}{\sqrt{d}}\pmod{p^{n+g}}$. Dividing by $p^{g-1}$, the result follows.
\end{proof}
With Proposition \ref{classnumberzeta2}, we can formulate the following result on the non-vanishing mod $p$ of the classical Dedekind zeta function
\begin{cor}Let $K/\mathbb{Q}$ be a real abelian extension. Let $p>3$ be an unramified prime such that $(p,h_k)=1$ and $R_p(K)\in
p^{g-1}\mathbb{Z}_p^*$. Then, for any $n\geq 1$,
$$
p\nmid\dfrac{\zeta_K\left(1-p^n(p-1)\right)}{\zeta(1-p^n(1-p))}.
$$
\label{nonvanishingmodp}
\end{cor}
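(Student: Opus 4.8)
The plan is to obtain the corollary directly from Theorem \ref{classnumberzeta2}. Fix once and for all the fundamental system of units furnished by that theorem, so that
$$
\frac{2^{g-1}hR^{(p,n)}(K)}{\sqrt{d}}\equiv\frac{\zeta_K\left(1-p^n(p-1)\right)}{\zeta\left(1-p^n(p-1)\right)}\pmod{p^{n+1}}
$$
for every $n\geq 1$. Since $n\geq 1$, this is in particular a congruence modulo $p$, so it suffices to prove that the left-hand side is a $p$-adic unit, i.e.\ that it has $p$-adic valuation $0$; the asserted non-vanishing modulo $p$ of $\zeta_K\left(1-p^n(p-1)\right)/\zeta\left(1-p^n(p-1)\right)$ will then follow for every $n$. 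The whole argument thus comes down to a valuation count on the four factors $2^{g-1}$, $h$, $\sqrt{d}$ and $R^{(p,n)}(K)$.

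Three of them are immediate. Since $p>3$ we have $v_p(2^{g-1})=0$; the hypothesis $(p,h)=1$ gives $v_p(h)=0$; and because $p$ is unramified in $K$ it does not divide the discriminant $d$, so $v_p(\sqrt{d})=\tfrac12 v_p(d)=0$. It remains to show that $v_p\left(R^{(p,n)}(K)\right)=0$, and this is where the hypothesis $R_p(K)\in p^{g-1}\mathbb{Z}_p^*$ — that is, $v_p(R_p(K))=g-1$ — is used. By Lemma \ref{fermatquotient2} each entry $Q_{p,n}(\sigma_j(\varepsilon_k))$ has non-negative valuation, since $-pQ_{p,n}(z)\equiv\log_p z\pmod{p^{n+2}}$ and $\log_p$ sends a $p$-adic unit to an element of valuation $\geq 1$; hence $v_p\left(R^{(p,n)}(K)\right)\geq 0$. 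Now apply Proposition \ref{fermatquotient3} to the chosen system: $R_p(K)\equiv (-p)^{g-1}R^{(p,n)}(K)\pmod{p^{n+g}}$. Since $v_p(R_p(K))=g-1<g+n$, the ultrametric inequality forces $v_p\left((-p)^{g-1}R^{(p,n)}(K)\right)=g-1$, whence $v_p\left(R^{(p,n)}(K)\right)=0$. Combining the four valuations, the left-hand side of the displayed congruence is a $p$-adic unit, and therefore so is $\zeta_K\left(1-p^n(p-1)\right)/\zeta\left(1-p^n(p-1)\right)$, which is the corollary.

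I do not expect a genuine obstacle: under the stated hypotheses the corollary is essentially a bookkeeping consequence of Theorem \ref{classnumberzeta2}. The only point requiring a little care is that $R^{(p,n)}(K)$ is a priori defined only up to sign and a priori depends on the choice of fundamental system, so one must make sure that the system entering the displayed congruence of Theorem \ref{classnumberzeta2} is the same one (or at least one compatible with it) for which the comparison $R_p(K)\equiv (-p)^{g-1}R^{(p,n)}(K)$ of Proposition \ref{fermatquotient3} is invoked. This causes no difficulty, because that comparison — being obtained entrywise from Lemma \ref{fermatquotient2} followed by taking determinants — holds, up to the same sign ambiguity, for every fundamental system of units.
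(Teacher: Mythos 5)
Your proposal is correct and follows essentially the same route as the paper: reduce to showing the left-hand side of the congruence in Theorem \ref{classnumberzeta2} is a $p$-adic unit, using $(p,h)=1$, unramifiedness for $\sqrt{d}$, and the hypothesis $R_p(K)\in p^{g-1}\mathbb{Z}_p^*$ to get $R^{(p,n)}(K)\in\mathbb{Z}_p^*$. The paper merely asserts this last implication, whereas you spell out the ultrametric argument via Proposition \ref{fermatquotient3}; that is a welcome expansion, not a different proof.
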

\begin{proof}
If $R_p(K)\in p^{g-1}\mathbb{Z}_p^*$, then, for any $n\geq 1$, $R^{(p,n)}(K)\in\mathbb{Z}_p^*$. Since $p$ is unramified at $K$, $(p,d_K)=1$. Since $(p,h_K)=1$, the left hand side of the equality of Theorem \ref{classnumberzeta2} is a $p$-adic unit. Hence, the result follows.
\end{proof}
\begin{defn}(Iwasawa, \cite{iwasawa})
Let $K/\mathbb{Q}$ be a real abelian extension of $\mathbb{Q}$. The $p$-adic Dedekind zeta function of $K$ is $\zeta_{K,p}(s)=\displaystyle\prod
L_p(1-s;\chi)$, where the product runs through the group of characters of Gal$(K/\mathbb{Q})$.
\end{defn}
The function $\zeta_{K,p}$ is meromorphic with a simple pole at $s=0$, which can be cancelled out dividing by $L_p(1-s;1)$. The quotient is, hence, an analytic function denoted by $\zeta_{K/\mathbb{Q},p}$. By passing to the limit in Corollary \ref{nonvanishingmodp}, we have the
following result.
\begin{thm}Let $K/\mathbb{Q}$ be a real abelian extension. Let $p$ be an unramified prime such that $(p,h_k)=1$ and $R_p(K)\in
p^{g-1}\mathbb{Z}_p^*$. Then, $$
\left|\zeta_{K/\mathbb{Q},p}(1)\right|_p=1.
\label{nonvanishingmodp2}
$$
\end{thm}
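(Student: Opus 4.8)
The plan is to deduce the statement as the limit $n \to \infty$ of Corollary \ref{nonvanishingmodp}, using the $p$-adic continuity of the Kubota-Leopoldt $L$-functions. First I would recall that for each non-trivial character $\chi$ of $\mathrm{Gal}(K/\mathbb{Q})$, the function $s \mapsto L_p(1-s;\chi)$ is $p$-adically analytic on $\mathbb{Z}_p$, so the product $\zeta_{K/\mathbb{Q},p}(s) = \prod_{\chi \neq 1} L_p(1-s;\chi)$ is an analytic (in particular continuous) function of $s \in \mathbb{Z}_p$. Next, observe that the sequence $s_n = p^n(p-1)$ converges to $0$ in $\mathbb{Z}_p$ as $n \to \infty$, hence $s \mapsto \zeta_{K/\mathbb{Q},p}(s)$ evaluated along $1 - s_n$ converges to $\zeta_{K/\mathbb{Q},p}(1)$; equivalently, by the interpolation property (Theorem \ref{interpolationkubota}) and the estimate of Proposition \ref{padicstimation}, each $L_p(1 - p^n(p-1);\chi)$ converges to $L_p(1;\chi) = \mathcal{L}_p(\chi)$ as $n \to \infty$.

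The core computation is to identify $\prod_{\chi \neq 1} L_p(1 - p^n(p-1);\chi)$ with the quotient $\zeta_K(1-p^n(p-1))/\zeta(1-p^n(p-1))$ up to the rational factor $2^{g-1}hR^{(p,n)}(K)/\sqrt{d}$, which is exactly the content of Theorem \ref{classnumberzeta2} together with the classical interpolation $L_p(1-m;\chi) = -(1-\chi\omega^{-m}(p)p^{m-1})B_{m,\chi\omega^{-m}}/m$ relating the $p$-adic value to the complex $L$-value at the relevant negative integers. Under the hypotheses $(p,h_K)=1$, $p$ unramified (so $(p,d_K)=1$), and $R_p(K) \in p^{g-1}\mathbb{Z}_p^*$, Corollary \ref{nonvanishingmodp} gives that $\zeta_K(1-p^n(p-1))/\zeta(1-p^n(p-1))$ is a $p$-adic unit for every $n$; passing to the limit, its limit — which is $\prod_{\chi\neq 1}\mathcal{L}_p(\chi) = \zeta_{K/\mathbb{Q},p}(1)$ — has $p$-adic absolute value $1$, since the unit ball $\mathbb{Z}_p^*$ is closed (indeed $|\cdot|_p$ takes discrete values, so a convergent sequence of units with a nonzero limit has a unit as limit).

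Putting this together: from $R_p(K) \in p^{g-1}\mathbb{Z}_p^*$ one gets $R^{(p,n)}(K) \in \mathbb{Z}_p^*$ via Proposition \ref{fermatquotient3}, so $2^{g-1}hR^{(p,n)}(K)/\sqrt{d} \in \mathbb{Z}_p^*$ for all $n$ (using $(p, 2h_K d_K) = 1$, valid since $p > 3$ is unramified and coprime to $h_K$). By Theorem \ref{classnumberzeta2} this common unit equals $\zeta_K(1-p^n(p-1))/\zeta(1-p^n(p-1))$ modulo $p^{n+1}$; letting $n \to \infty$ and using continuity of $\zeta_{K/\mathbb{Q},p}$ at $s=1$ identifies the limit with $\zeta_{K/\mathbb{Q},p}(1)$, and a limit of $p$-adic units (in the metrically complete $\mathbb{Z}_p$) whose congruence classes stabilize to a unit is itself a unit, whence $|\zeta_{K/\mathbb{Q},p}(1)|_p = 1$.

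The main obstacle I anticipate is making the limit rigorous: one must check that the factor $2^{g-1}hR^{(p,n)}(K)/\sqrt{d}$ genuinely converges $p$-adically (not merely that each term is a unit) and that its limit is $\zeta_{K/\mathbb{Q},p}(1)$ rather than some other quantity. This requires combining the congruence in Theorem \ref{classnumberzeta2} with the separate fact that $\prod_{\chi\neq1} L_p(1-p^n(p-1);\chi) \to \prod_{\chi\neq1} L_p(1;\chi)$; the congruences alone only pin down each term modulo $p^{n+1}$, so one needs the analyticity of $\zeta_{K/\mathbb{Q},p}$ to conclude actual convergence of the left side, after which the two limits must agree. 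A minor subtlety is the pole of $\zeta_{K,p}$ at $s=0$: one works throughout with the quotient $\zeta_{K/\mathbb{Q},p} = \zeta_{K,p}/L_p(1-s;1)$, which is analytic at $s=1$, so the evaluation there is legitimate.
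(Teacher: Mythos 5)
Your proposal is correct and follows exactly the route the paper takes: the paper's entire argument is the single sentence ``By passing to the limit in Corollary \ref{nonvanishingmodp}'', i.e.\ letting $n\to\infty$ and using the continuity of the $p$-adic $L$-functions together with the closedness of $\mathbb{Z}_p^*$. You simply supply the details (interpolation at $1-p^n(p-1)$, convergence of $1-p^n(p-1)$ to $1$, and the limit of units being a unit) that the paper leaves implicit.
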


\emph{Acknowledegements.} I thank Pilar Bayer for carefully reading this note and for some valuable comments which allowed me to improve the earlier version, and Paloma Bengoechea, who has translated the abstract into French.


\begin{thebibliography}{99}
\bibitem{iwasawa} K. Iwasawa: Lectures on $p$-adic L-functions. \emph{Annals of Mathematics Studies} 74, 1972.

\bibitem{kubota1} T. Kubota, H.W. Leopoldt: Eine $p$-adische Theorie der Zetawerte I. \emph{J. Reine und Angew. Math.} 214/215 (1964), 328--339.

\bibitem{kubota2} H.W. Leopoldt: Eine $p$-adische Theorie der Zetawerte II. \emph{J. Reine und Angew. Math.} 274/275 (1975), 328--339.

\bibitem{leopoldt} H.W. Leopoldt: Zur Arithmetik in abelschen Zahlk\"{o}rpern. \emph{J. Reine und Angew. Math.} 209 (1962), 54--71.

\end{thebibliography}
\end{document}